\date{}
\theoremstyle{plain}
\newtheorem{theorem}{Theorem}
\newtheorem{corollary}{Corollary}
\newtheorem{rem}{Remark}
\newtheorem{question}{Question}
\theoremstyle{definition}
\theoremstyle{remark}
\def\N{{\mathbb N}}
\def\Z{{\mathbb Z}}
\def\R{{\mathbb R}}
\title{Minor theory for quasipositive surfaces}
\author{S.~Baader, P.~Dehornoy, L.~Liechti}
\begin{document}

\begin{abstract} The set of quasipositive surfaces is closed under incompressible inclusion. We prove that the induced order on fibre surfaces of positive braid links is almost a well-quasi-order. When restricting to quasipositive surfaces containing a fixed root of a full twist, we get an actual well-quasi-order. 
\end{abstract}

\dedicatory{pour le quatre-vingti\`{e}me anniversaire de Norbert A'Campo}

\maketitle

\section{Introduction}

The purpose of the present paper is to study the relation on Seifert surfaces induced by incompressible inclusion. We say that a subsurface $\Sigma' \subset \Sigma$ with smooth boundary of a Seifert surface $\Sigma \subset \R^3$ is incompressible, if the complement $\Sigma \setminus \Sigma'$ has no disc component. Given two Seifert surfaces $\Sigma_1,\Sigma_2 \subset \R^3$, we call $\Sigma_1$ a minor of $\Sigma_2$, if $\Sigma_1$ is isotopic to an incompressible subsurface of $\Sigma_2$. The notation $\Sigma_1<\Sigma_2$ will be used, since this partial relation is reflexive, symmetric, and transitive. The word `minor' is borrowed from graph theory, where a minor refers to a graph obtained from a finite graph by a finite number of vertex and edge deletions, and edge contractions~\cite{Lo}. When looking at graphs embedded in Seifert surfaces as spines, these operations amount to passing to an incompressible subsurface. There is a big difference between the graph minor and the surface minor relations, though: the former is a well-quasi-order, meaning that every infinite family of finite graphs contains two comparable graphs~\cite{RS}. This is not true for infinite families of Seifert surfaces. For example, there exist infinite families of pairwise non-comparable annuli embedded in $\R^3$. Indeed, two embedded annuli are comparable, if and only if they have isotopic core curves and the same framing.
The results of this paper originate from attempts at answering the following question.

\begin{question} Does every infinite sequence of fibre surfaces
$$\Sigma_1,\Sigma_2,\Sigma_3,\ldots \subset \R^3$$
associated with non-split positive braids contain a pair of comparable elements~$\Sigma_i<\Sigma_j$?
\end{question}

Here a positive braid $\beta \in B_n$ is a finite product of the standard generators $\sigma_1,\sigma_2,\ldots,\sigma_{n-1}$ of the braid group $B_n$. Non-split means that every generator appears at least once in~$\beta$. This ensures that the natural closure of the braid~$\beta$ is a fibred link $L(\beta) \subset \R^3$ admitting a unique Seifert surfaces $\Sigma(\beta) \subset \R^3$ of minimal genus~\cite{St}.

Our first result provides a positive answer to an asymptotic version of Question~1.
Given two connected Seifert surfaces $\Sigma_1,\Sigma_2 \subset \R^3$ and a fixed ratio $r \in (0,1)$, we say that $\Sigma_1$ is an $r$-minor of $\Sigma_2$, if there exists a connected Seifert surface $\Sigma \subset \R^3$ with $\Sigma<\Sigma_1$, $\Sigma<\Sigma_2$ and $|\chi(\Sigma)| \geq r |\chi(\Sigma_1)|$.

\begin{theorem} \label{almostminor}
Let $r \in (0,1)$ and let $\Sigma_1,\Sigma_2,\Sigma_3,\ldots \subset \R^3$ be an infinite sequence of fibre surfaces associated with non-split positive braids. Then there exist $i \neq j$, so that~$\Sigma_i$ is an $r$-minor of~$\Sigma_j$.
\end{theorem}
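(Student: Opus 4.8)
The plan is to transport the topological minor order on fibre surfaces to a combinatorial minor order on finite graphs, where a well-quasi-order theorem is available, and then to pay a bounded fraction of the Euler characteristic in order to turn a combinatorial comparison back into an honest incompressible inclusion. The factor $r<1$ should not be a technical artifact but a genuine reflection of the fact that the graph order is strictly finer than the surface order.

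The first step is to encode each fibre surface $\Sigma(\beta_k)$ by a finite graph $G_k$ read off from the positive braid word: the \emph{Seifert graph}, whose vertices record the Seifert discs (the strands) and whose edges record the bands (the letters $\sigma_i$), retaining the cyclic order in which the bands meet each disc. Because $\Sigma(\beta_k)$ retracts onto this graph, one has $\chi(\Sigma(\beta_k)) = V(G_k) - E(G_k) = n_k - c_k$, so $|\chi(\Sigma(\beta_k))|$ is literally $E(G_k) - V(G_k)$, and the surface can be reconstructed from the decorated graph. The aim of the encoding is that a combinatorial sub-object of $G_i$ realizable inside $G_j$ should let one realize one surface incompressibly inside the other.

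The second step is the well-quasi-order input. When the braid indices $n_k$ stay bounded the $\beta_k$ are words over a fixed finite alphabet, and Higman's lemma already produces $i \neq j$ with one word embedding in the other; in general, where the complexity is spread over unboundedly many strands, I would instead feed the sequence $G_1,G_2,\ldots$ into the graph minor theorem of Robertson--Seymour to extract $i \neq j$ with $G_i$ a minor of $G_j$. Before doing so I would pass to a subsequence along which $|\chi(\Sigma_k)|$ is either constant or tends to infinity: in the constant case one reduces to the finiteness of positive-braid fibre surfaces of a given complexity, which forces a repetition; the substantive case is $|\chi(\Sigma_k)| \to \infty$, which is what makes the asymptotic ratio attainable.

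The third step, which I expect to be the crux, is the conversion of the common combinatorial minor $H$ of $G_i$ and $G_j$ into an incompressible subsurface $\Sigma$ of both $\Sigma_i$ and $\Sigma_j$. Vertex and edge deletions translate into removing discs and bands, but they yield an incompressible subsurface only after discarding the disc components they create, and edge contraction --- which is unavoidable, since it is exactly what upgrades the subgraph order to a well-quasi-order --- has no exact surface counterpart at all. The key quantitative lemma I would prove is that realizing $H$ honestly costs only a bounded amount of Euler characteristic per graph operation, so that along the subsequence with $|\chi(\Sigma_k)| \to \infty$ this loss is a vanishing fraction of $|\chi(\Sigma_i)|$ and can be pushed below any prescribed $1-r$. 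This delivers $\Sigma < \Sigma_i$, $\Sigma < \Sigma_j$ and $|\chi(\Sigma)| \geq r\,|\chi(\Sigma_i)|$. The main obstacle is precisely matching contraction to an incompressible inclusion with quantitative control of the sacrificed Euler characteristic; this same mismatch is the structural reason why a true well-quasi-order fails here and only the $r$-minor conclusion survives.
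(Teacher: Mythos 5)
Your third step is a genuine gap, not a technicality you can defer: the theorem you want there is false in the generality you need, and you yourself only promise a lemma rather than prove one. Robertson--Seymour gives you an \emph{abstract} graph minor $G_i$ of $G_j$, but the minor relation of the paper is about isotopy classes of embedded surfaces: $\Sigma_i<\Sigma_j$ requires a subsurface of $\Sigma_j$ that is isotopic \emph{in $\R^3$} to $\Sigma_i$. The Seifert graph, even decorated with cyclic orders (a ribbon graph), does not capture the embedding; the paper's own opening example makes exactly this point, since knotted annuli all have the same abstract ribbon graph (a circle) yet are pairwise incomparable because comparability forces isotopic cores and equal framings. Concretely: performing the edge deletions inside $\Sigma_j$ (band cuts) and contractions (which are actually \emph{free} on the embedded surface --- amalgamating a disc--band--disc into one disc does not change the subsurface at all, so your budget is aimed at the wrong enemy) produces some incompressible subsurface of $\Sigma_j$ whose Seifert graph is abstractly isomorphic to $G_i$; nothing whatsoever forces that subsurface to be isotopic to $\Sigma_i$, and Robertson--Seymour gives no control over how the branch sets sit. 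There is no well-quasi-order theorem for embedded graphs or chord diagrams that would repair this --- the paper flags precisely this obstruction (intersection graphs do not determine embedded chord diagrams) as the difficulty behind its open Question~3. Your quantitative plan also fails on its own terms: a bounded cost per operation, times a number of operations proportional to the size of the graphs, is a constant fraction of $|\chi(\Sigma_i)|$, not a fraction that can be pushed below $1-r$ by passing to a subsequence with $|\chi|\to\infty$.

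The paper's actual proof never leaves the world of braid words, where deleting a letter is simultaneously the combinatorial operation and an honest embedded band cut yielding the canonical surface of the shorter positive braid word. Fix $N$ with $\frac{1}{N}<1-r$. In each word $\beta_k$ there is a residue class of generators $\sigma_m$, $m\equiv i_k \pmod N$, carrying at most a $\frac{1}{N}$ fraction of the letters; deleting all but one occurrence in each such column costs at most a $(1-r)$ fraction of $|\chi(\Sigma_k)|$ and splits $\Sigma(\alpha_k)$ into a connected sum of positive braid surfaces of braid index at most $N$. Finite sequences of positive braid words in $B_N$ are well-quasi-ordered by Higman's lemma combined with Rado's theorem (well-quasi-ordering of finite sequences over a well-quasi-ordered set), which yields $i<j$ with $\Sigma(\alpha_i)<\Sigma(\alpha_j)<\Sigma_j$, while $\Sigma(\alpha_i)<\Sigma_i$ and $|\chi(\Sigma(\alpha_i))|\geq r|\chi(\Sigma_i)|$ by construction. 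So the factor $r$ is the price of sparsifying columns to reach bounded braid index --- not, as you diagnose, the price of simulating edge contractions; that part of your structural picture is also off.
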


There is a more general class of braids whose closure admits a canonical Seifert surface of minimal genus: strongly quasipositive braids~\cite{Ru1}. These are finite products of positive band words, i.e. words of the form
$$\sigma_{i,j}=(\sigma_i \cdots \sigma_{j-2})\sigma_{j-1}(\sigma_i \cdots \sigma_{j-2})^{-1}\in B_n,$$
for $i<j \leq n$. The closure of a product of $m$ positive bands in $B_n$ -- called a strongly quasipositive link -- admits a canonical Seifert surface of Euler characteristic $\chi=n-m$, consisting of $n$ discs with $m$ connecting bands. This is shown in Figure~1, for the strongly quasipositive braid
$$\sigma_{4,7}\sigma_{3,5}\sigma_{2,4}\sigma_{1,3}\sigma_{2,6}\sigma_{5,7}\sigma_{1,6}\in B_7,$$
where the resulting surface is an embedded quasipositive surface with $\chi=0$, i.e. an embedded quasipositive annulus. In fact, there exist quasipositive annuli with any prescribed knotted core curve, as shown in~\cite{Ru2}. As a consequence, there exist infinite families of pairwise incomparable quasipositive surfaces.
\begin{figure}[htb]
\begin{center}
\raisebox{-0mm}{\includegraphics[scale=0.7]{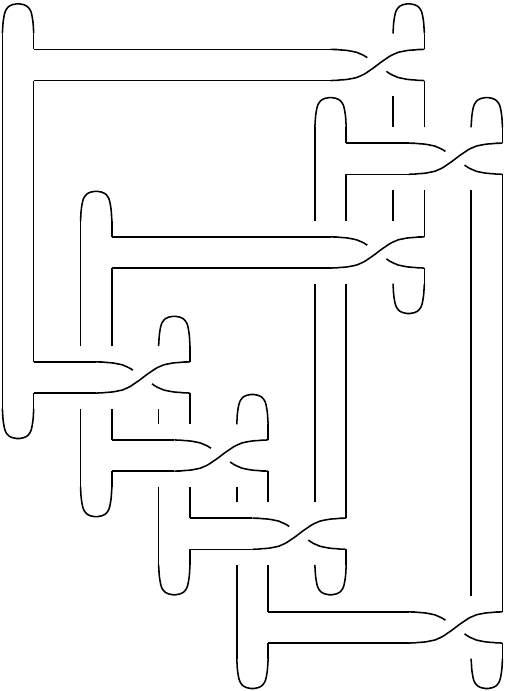}}
\caption{Knotted quasipositive annulus}
\end{center}
\end{figure}

Before stating our second result, let us give a good reason for studying the incompressible inclusion order on quasipositive Seifert surfaces. In~\cite{BF}, Feller and Borodzik showed that every link is topologically concordant to a strongly quasipositive link. In particular, the topological 4-genus $g_4(L)$ of links is determined by its value on all strongly quasipositive links. The canonical Seifert surface~$\Sigma$ associated with a strongly quasipositive braid closure~$L$ minimizes the Seifert genus $g(L)$ of its boundary link~$\partial \Sigma=L$, but not $g_4(L)$, in general. The difference
$$\Delta g(L)=g(L)-g_4(L),$$
known as the genus defect, can be arbitrarily large. An important feature of the genus defect is its monotonicity under the minor relation on quasipositive Seifert surfaces: $\Sigma_1 < \Sigma_2$ implies 
$$\Delta g(\partial \Sigma_1) \leq \Delta g(\partial \Sigma_1).$$
This was used in~\cite{BFLL} in order to estimate the topological 4-genus of torus links. When a family of quasipositive Seifert surfaces is well-quasi-ordered (i.e. when there are no infinite families of pairwise incomparable surfaces), then the following statement is true. For all $N \in \N$, the minor-closed property $\Delta g(\partial \Sigma) \leq N$ can be characterized by a finite set of forbidden minors $\Sigma_1,\ldots,\Sigma_k \subset \R^3$. This means that $\Delta g(\partial \Sigma)>N$ holds, if and only if $\Sigma$ contains one of the surfaces $\Sigma_1,\ldots,\Sigma_k$ as an incompressible subsurface. The existence of a finite set of forbidden minors characterizing the property $\Delta g(\partial \Sigma) \leq N$ in the case of positive braid knots was established by the third author in~\cite{Li}. We do not know whether there exists an algorithm that detects the inclusion $\Sigma_i<\Sigma$ -- possibly via Haken's theory of normal surfaces. If so, the algorithmic determination of the topological 4-genus within a class of links with well-quasi-ordered Seifert surfaces comes down to figuring out a complete finite set of forbidden minors, for all conditions $\Delta g(\partial \Sigma) \leq N$, for all $N \in \N$.

In order to state our second result, we need two special examples of positive braids: the dual Garside element
$$\delta_n=\sigma_1 \sigma_2 \cdots \sigma_{n-1} \in B_n$$
and the positive full twist $\Delta_n^2=\delta_n^n \in B_n$.
We say that a quasipositive surface $\Sigma \subset \R^3$ contains the $N$-th root of a full twist, if $\Sigma$ is the canonical Seifert surfaces associated with a strongly quasipositive braid of the form $\delta_n^k \beta \in B_n$, where $k=\lfloor \frac{n}{N} \rfloor$ and $\beta \in B_n$ is a strongly quasipositive braid.

\begin{theorem} \label{twist}
Let $N \geq 1$ and let $\Sigma_1,\Sigma_2,\Sigma_3,\ldots \subset \R^3$ be an infinite sequence of quasipositive surfaces containing the $N$-th root of a full twist. Then there exist $i \neq j$ with $\Sigma_i<\Sigma_j$.
\end{theorem}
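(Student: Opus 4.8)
The plan is to turn the inclusion order into a combinatorial order on the band data of the canonical surfaces and then invoke Higman's lemma, using the fractional twist $\delta_n^k$ to bridge the gap between combinatorial domination and genuine incompressible inclusion.

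First I would fix the disc--band model: each $\Sigma_i$ is the canonical Seifert surface of $\delta_{n_i}^{k_i}\beta_i$, built from $n_i$ discs and a temporally ordered list of bands, the first $k_i(n_i-1)$ of which form the staircase pattern of $\delta_{n_i}^{k_i}$ while the rest record $\beta_i$. After passing to a subsequence I may assume the strand numbers $n_1\le n_2\le\cdots$ are non-decreasing, and I would treat the case of constant $n$ and the case $n_i\to\infty$ in parallel. The central geometric observation I would isolate as a lemma is that a pair of parallel bands joining the same two discs cobounds an annulus inside $\Sigma$, so such a pair can be deleted by an incompressible inclusion (the complementary piece is an annulus, not a disc), and dually such bands can be inserted. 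Since $\delta_n^k$ is a $k$-fold repetition of the staircase, the twist region supplies an abundance of parallel bands; moreover, by the dual Garside (Birman--Ko--Lee) divisibility of $\delta_n$ by every band generator $\sigma_{a,b}$, it supplies a band between \emph{any} pair of the strands it spans, with available multiplicity growing together with $k=\lfloor n/N\rfloor$.

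With this flexibility in hand, the next step is the order-theoretic engine. I would encode $\Sigma_i$ by its strand number together with the finite sequence of bands of $\beta_i$, each recorded as a pair, and declare one encoding dominated by another when there is an order-preserving injection of strands $\varphi\colon\{1,\dots,n_i\}\hookrightarrow\{1,\dots,n_j\}$ under which the band sequence of the smaller surface embeds, as a subsequence with matched labels, into that of the larger. When $n$ is bounded the alphabet of pairs is finite and Higman's lemma applies directly; for $n_i\to\infty$ I would run the same argument over the well-quasi-order $(\N^2,\le)$ of pairs (Dickson's lemma) with strands inserted order-preservingly, or, if cleaner, present the whole encoding as a labelled forest and appeal to Kruskal's tree theorem. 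Either way I obtain indices $i\ne j$ for which the band data of $\Sigma_i$ is dominated by that of $\Sigma_j$ together with an explicit order-preserving strand injection $\varphi$ realizing the domination; designing the encoding so that this combinatorial order really is a well-quasi-order \emph{and} yields a single global $\varphi$ is part of the work here.

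The final and hardest step is to promote this combinatorial domination to an honest incompressible inclusion $\Sigma_i<\Sigma_j$. Here I would use the twist region of $\Sigma_j$ to realize each band $\sigma_{a,b}$ of $\Sigma_i$ as the band $\sigma_{\varphi(a),\varphi(b)}$ inside $\Sigma_j$: the staircase bands of $\delta_{n_i}^{k_i}$ map into the staircase of $\delta_{n_j}^{k_j}$ after routing across the inserted strands, and the remaining $\beta_i$-bands are either matched to $\beta_j$-bands by the subsequence embedding or produced from the twist region by the Birman--Ko--Lee divisibility above. The delicate point is incompressibility: the complement of the embedded copy of $\Sigma_i$ consists of the inserted strands, the unused twist bands, and the unused $\beta_j$-bands, and one must verify that none of these complementary pieces is a disc. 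This is exactly where the quantitative hypothesis enters, since $k=\lfloor n/N\rfloor$ grows linearly in $n$, the twist region always contains enough parallel bands that every leftover piece can be arranged to cobound an annulus or a higher-genus component rather than a disc. I expect this verification --- balancing the number of inserted strands and discarded bands against the twisting exponent so that all complementary components are non-discs --- to be the main obstacle, and the reason a fixed fraction $1/N$ of a full twist already suffices whereas no twist at all does not.
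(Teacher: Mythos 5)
Your bounded-index case is exactly the paper's argument (Higman's lemma over the finite alphabet $\{\sigma_{i,j}\}_{1\le i<j\le n}$, with letter deletion realized by band cutting), and that part is fine. The gap is in the unbounded-index case, and it is not a technical loose end but a structural one: the order-theoretic engine you propose cannot exist in the form you need. The order you actually require --- a single order-preserving strand injection $\varphi$ together with a subsequence embedding of bands in which $\sigma_{a,b}$ is matched to $\sigma_{\varphi(a),\varphi(b)}$ --- is strong enough that combinatorial domination implies incompressible inclusion of the canonical surfaces (cut the unmatched bands, discard the unmatched discs). But precisely for that reason it is \emph{not} a well-quasi-order: Rudolph's knotted quasipositive annuli, cited in the paper's introduction, form an infinite antichain of quasipositive surfaces, each encoded by such band data, so they form an infinite antichain for your combinatorial order as well. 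Conversely, the orders for which Higman-over-Dickson or Kruskal do apply (componentwise domination of endpoint pairs, with no global $\varphi$) lose the crossing and nesting pattern of the bands and therefore do not imply any geometric inclusion. So ``designing the encoding so that this combinatorial order really is a well-quasi-order and yields a single global $\varphi$'', which you defer as ``part of the work'', is exactly the step that provably cannot be done on the $\beta$-data alone; the twist hypothesis must enter the argument before, not after, the wqo step.

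What the paper does instead makes the entire matching machinery unnecessary in the unbounded case: no band of $\beta_i$ is ever matched to anything. If the braid indices $M_k$ are unbounded, then $k=\lfloor M_k/N\rfloor\to\infty$, and $\delta_{M_k}^{k}$ contains $(\sigma_1\cdots\sigma_{k-1})^{k}$ as a word minor, so $\Sigma_{i_k}$ contains the fibre surface of the full twist $\Delta_k^2\in B_k$ as an incompressible subsurface. By Rudolph's characterization of quasipositive surfaces, any \emph{fixed} $\Sigma_i$ is an incompressible subsurface of the fibre of $\Delta_n^2$ for some $n$, and that fibre sits inside the fibre of $\Delta_k^2$ for all $k\ge n$ (again a word minor). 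Hence for every $i$ there is $j$ with $\Sigma_i<\Sigma_j$: the twist region of $\Sigma_j$ alone swallows all of $\Sigma_i$, twist part and $\beta_i$-part alike. Your own observation about Birman--Ko--Lee divisibility (every band generator divides $\delta_n$, so $\delta_n^m$ contains any product of $m$ bands as a band-word minor) could be turned into a correct variant of this absorption argument, but only if you abandon the plan of matching $\beta_i$-bands to $\beta_j$-bands via a wqo. Finally, your worry about ``balancing'' complementary pieces against the twisting exponent to avoid disc components is misplaced: cutting bands and discarding isolated discs automatically yields incompressible subsurfaces in the sense used here, and no quantitative balancing occurs anywhere in the paper's proof.
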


A special case of quasipositive surfaces $\Sigma \subset \R^3$ containing roots of any order of a full twist are the fibre surfaces of links associated with isolated plane curve singularities, known as algebraic links. Indeed, all algebraic links are positive iterated torus links with at least one positive full twist~\cite{EN}.

\begin{corollary} Every infinite family of fibre surfaces associated with algebraic links contains a pair of comparable elements.
\end{corollary}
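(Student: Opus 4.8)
The plan is to derive the corollary directly from Theorem~\ref{twist}, applied with the single value $N=1$. The input we need is the structure statement cited just above from~\cite{EN}: every algebraic link is a positive iterated torus link carrying at least one positive full twist. Concretely, this means that for each index $i$ the fibre surface $\Sigma_i$ is the canonical Seifert surface associated with a strongly quasipositive braid of the form $\Delta_{n_i}^2\,\beta_i=\delta_{n_i}^{n_i}\,\beta_i\in B_{n_i}$, where $\beta_i$ is again strongly quasipositive. The first step is therefore to justify this identification: algebraic links are fibred, their fibre is the Milnor fibre, and for a strongly quasipositive braid closure the canonical (Bennequin) surface is precisely this fibre surface, so the $\Sigma_i$ are exactly the surfaces to which Theorem~\ref{twist} applies.

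The second step is bookkeeping with the definition of ``containing the $N$-th root of a full twist''. For $N=1$ one has $k=\lfloor n/1\rfloor=n$, so a surface contains the $1$-st root of a full twist exactly when its defining braid has the form $\delta_n^n\,\beta=\Delta_n^2\,\beta$ with $\beta$ strongly quasipositive, which by the previous step is the case for every $\Sigma_i$. This is also the strongest instance of the definition and explains the phrase ``roots of any order'': a surface containing a full twist contains the $N$-th root for every $N\geq 1$, since $\delta_n^n\beta=\delta_n^{\lfloor n/N\rfloor}\,(\delta_n^{\,n-\lfloor n/N\rfloor}\beta)$ and the parenthesised factor stays strongly quasipositive. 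The braid indices $n_i$ are allowed to vary with $i$; what matters is that the \emph{order} of the root, namely $N=1$, is common to the whole sequence, which is exactly the uniformity demanded by Theorem~\ref{twist}. Applying that theorem to $\Sigma_1,\Sigma_2,\Sigma_3,\ldots$ produces indices $i\neq j$ with $\Sigma_i<\Sigma_j$, which is the assertion of the corollary.

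The real content has already been absorbed into Theorem~\ref{twist}, so I do not expect a genuine obstacle; the only place where care is needed is the translation between the geometric description of algebraic links and the combinatorial hypothesis of the theorem. In particular, one must ensure that the full positive twist furnished by~\cite{EN} sits inside the braid as an honest factor $\delta_n^n$ (equivalently $\Delta_n^2$) on the correct number of strands, so that the complementary factor $\beta$ is genuinely strongly quasipositive and the canonical surface of $\delta_n^n\beta$ really coincides with the Milnor fibre. Once this normalisation is verified, the corollary is immediate.
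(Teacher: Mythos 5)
Your proof is correct and follows the same route as the paper: the paper likewise deduces the corollary by observing, via~\cite{EN}, that fibre surfaces of algebraic links are canonical Seifert surfaces of strongly quasipositive braids containing a positive full twist (hence containing roots of any order, in particular the case $N=1$), and then applying Theorem~\ref{twist}. Your extra care about normalising the full twist as an honest factor $\delta_n^n\beta$ and identifying the Milnor fibre with the Bennequin surface is exactly the translation the paper leaves implicit.
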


We obtain another interesting special case when all surfaces $\Sigma_k$ are associated with strongly quasipositive braids in $B_n$, for some fixed integer $n \geq 1$. Indeed, by choosing $N>n$ in Theorem~\ref{twist}, we see that the condition on the presence of a power of $\delta_n$ disappears, since $\lfloor \frac{n}{N} \rfloor=0$. 
The mere presence of a single power of the factor $\delta_n$ in a strongly quasipositive braid $\delta_n \beta \in B_n$ has a surprising consequence: the Seifert surface $\Sigma(\delta_n \beta)$ is then a fibre surface for the link $L(\delta_n \beta)$~\cite{Ban}.

\begin{question} Does every infinite sequence of fibre surfaces associated to strongly quasipositive braids with at least one power of $\delta_n$ contain a pair of comparable elements?
\end{question}

The two classes of fibre surfaces associated to strongly quasipositive braids with at least one power of $\delta_n$ and non-split positive braids are special cases of so-called positive basket surfaces~\cite{Ban,Ru3}. 

\begin{question} Does every infinite sequence of positive basket surfaces contain a pair of comparable elements?
\end{question}

Positive basket surfaces can be described by finite ordered chord diagrams in the unit disc~\cite{Ru3}. The corresponding intersection graphs, called circle graphs, are well-studied, especially in view of their minor theory~\cite{Bou}. However, an embedded chord diagram is generally not determined by its abstract intersection graph. This adds to the difficulty of answering Question~3.

%There exist infinite families of pairwise incomparable quasipositive surfaces, for example quasipositive annuli with pairwise non-isotopic core curves, as constructed in~\cite{Ru2}. Nevertheless, in a certain probabilistic sense to be defined below, every sequence of quasipositive surfaces contains an infinite subsequence ordered by incompressible inclusion.

%\begin{theorem} \label{random}
%Let $\Sigma_1,\Sigma_2,\Sigma_3,\ldots \subset \R^3$ be a sequence of quasipositive surfaces
%associated with a random sequence of bipartite graphs. Then with probability one, there exists an infinite subsequence
%$$\Sigma_{i_1}<\Sigma_{i_2}<\Sigma_{i_3}<\ldots$$
%ordered by incompressible inclusion.
%\end{theorem} 

%How about using the bipartite graph model for quasipositive surfaces described in~\cite{Baa}, see Figure~2?
%Consider random subgraphs of the complete bipartite graph $\theta_{p,q} \subset \R^3$, then let $n=p+q$ tend to infinity!

%\begin{figure}[htb]
%\begin{center}
%\raisebox{-0mm}{\includegraphics[scale=0.7]{surface1.pdf}}

%\bigskip
%\raisebox{-0mm}{\includegraphics[scale=0.7]{surface2.pdf}}

%\caption{Surfaces associated with $\theta_{3,4}$ and $\Gamma \subset \theta_{3,4}$}
%\end{center}
%\end{figure}

\section{Almost minors}

Higman's lemma states that in every infinite set of words over a finite alphabet, there is a word that arises from another one by deleting a finite number of letters~\cite{H}. This applies in particular to positive braid words in a fixed braid group~$B_n$. The fibre surfaces associated with the corresponding link closures are then related by imcompressible inclusion, since deleting a letter amounts to cutting a band. Obviously, this type of algebraic divisibility argument does not carry over to families of positive braid words with unbounded braid index. The proof of Theorem~\ref{almostminor} requires some more band cuts, as well as a simple form of Kruskal's tree theorem~\cite{K}, proven earlier by Rado~\cite{Ra}:
the direct union
$$\coprod_{k=1}^{\infty} X^k$$
is well-quasi-ordered, i.e. every infinite family of elements in this union contains a pair $(x_1,\ldots,x_m)$, $(y_1,\ldots,y_n)$ with $m \leq n$ and the following property: there exist indices $1 \leq i_1 \leq \ldots i_m \leq n$ with
$$x_1 \leq y_{i_1}, \ldots, x_m \leq y_{i_m}.$$

\begin{proof}[Proof of Theorem~\ref{almostminor}]
Let $\beta_1,\beta_2,\ldots$ be an infinite sequence of non-split positive braid words.
Choose $N \in \N$ with $\frac{1}{N}<1-r$. For each braid word $\beta_k$, there exists an index $i_k \in \{1,2,\ldots,N\}$, so that the number of occurences of positive generators of the form $\sigma_m$ in $\beta_k$, with $m-i_k \in N \Z$, is at most $\frac{1}{N}$ times the word length of $\beta_k$. For all $m \in \N$ with $m-i_k \in N \Z$, delete all except one occurences of $\sigma_m$ in~$\beta_k$. The result is a non-split positive braid $\alpha_k$. By construction, the corresponding fibre surfaces $\Sigma(\alpha_k)$ and the original fibre surface $\Sigma_k=\Sigma(\beta_k)$ satisfy the relation $\Sigma(\alpha_k)<\Sigma_k$ and
$$|\chi(\Sigma(\alpha_k))| \geq r |\chi(\Sigma_k)|.$$
Moreover, the surface $\Sigma(\alpha_k)$ is a connected sum of finitely many surfaces associated with non-split positive braids in $B_N$. This is slightly imprecise: the left- and rightmost summand of $\Sigma(\alpha_k)$ might have a braid index smaller than~$N$. However, every non-split positive braid $\gamma \in B_n$ with $n<N$ can be completed to a non-split positive braid $\bar{\gamma} \in B_N$ with isotopic link closure. By Higman's lemma and the version of Kruskal's tree theorem discussed above, finite ordered sets of positive braid words in~$B_N$ are well-quasi-ordered. This implies the existence of two indices $i<j$ with $\Sigma(\alpha_i)<\Sigma(\alpha_j)<\Sigma_j$.
We conclude that $\Sigma_i$ is an $r$-minor of~$\Sigma_j$.
\end{proof}

\section{Quasipositive surfaces}

One of Rudolph's characterizations of quasipositive surfaces can be phrased as follows: they are precisely the incompressible subsurfaces of the fibre surfaces associated with the positive full twists $\Delta_n^2=\delta_n^n \in B_n$, where~$n$ ranges over all natural numbers~\cite{Ru1}. For a fixed number $n \in \N$, we can view a strongly quasipositive braid in $B_n$ as a finite word in the alphabet
$$\{\sigma_{i,j}\}_{1 \leq i<j \leq n}.$$
Here again, deleting a band $\sigma_{i,j}$ amounts to cutting a band of the corresponding quasipositive surface.

\begin{proof}[Proof of Theorem~\ref{twist}]
Let $\beta_1,\beta_2,\ldots$ be an infinite sequence of strongly quasipositive braids containing an $N$-th root of the positive full twist, and let $\Sigma_1,\Sigma_2,\ldots$ be the corresponding sequence of canonical Seifert surfaces. Suppose first that the family has bounded braid index: there exists $n \in \N$ so that all $\beta_k \in B_n$. Then, as in the previous section, Higman's lemma implies the existence of a pair of indices $i<j$ with $\Sigma_i<\Sigma_j$.

Suppose that the family has unbounded braid index. Then there exists a sequence of indices
$1 \leq i_1<i_2<\ldots$ with increasing braid index, say $\beta_{i_k} \in B_{M_k}$ with $M_k \geq Nk$. The assumption on the $N$-th root implies that $\beta_{i_k}$ can be written as $\delta_{M_k}^k \alpha_{i_k}$, with a suitable strongly quasipositive braid $\alpha_{i_k} \in B_{M_k}$. The positive braid $\delta_{M_k}^k \in B_{M_k}$ contains the braid $(\sigma_1 \sigma_2 \cdots \sigma_{k-1})^k$ as a word minor. As a consequence, the surface $\Sigma_{i_k}$ contains the fibre surface of the positive full twist $\Delta_k^2 \in B_k$ as  an incompressible subsurface. We conclude by invoking Rudolph's characterization of quasipositive surfaces~\cite{Ru1}. In the case of unbounded braid index, we obtain the following, slightly stronger, result: for all $i \in \N$, there exists $j>i$ with $\Sigma_j>\Sigma_i$.
\end{proof}

\bigskip
\noindent
\texttt{sebastian.baader@math.unibe.ch}

\smallskip
\noindent
\texttt{pierre.dehornoy@univ-grenoble-alpes.fr}

\smallskip
\noindent
\texttt{livio.liechti@.unifr.ch}

\end{document}